\newtheorem{theorem}{Theorem}
\newtheorem{lemma}{Lemma}
\newtheorem{remark}{Remark}
\author{Gabriel Elvin, Hajrudin Fejzi\'c, and Youngsu Kim}
\begin{document}
\title{On Wegner's 8-Coloring Theorem for Planar Graphs of Maximum Degree Three}
\maketitle
\renewcommand{\thefootnote}{}
\footnotetext{2020 Mathematics Subject Classification. Primary 05C15; Secondary 05C10, 05C12.}
\renewcommand{\thefootnote}{\arabic{footnote}}
\begin{abstract}
We provide a simplified proof 
of the following special case of Wegner's conjecture: every planar graph of maximum degree at most three admits a distance-2 coloring with at most eight colors. 
Our main contribution is a significant simplification of the most technically challenging part of Wegner’s proof: the case involving the removal of a 5-cycle.

\end{abstract}

\section{Introduction and Overview}

A central result in graph theory is the \emph{Four Color Theorem}, which asserts that every planar graph can be colored using just four colors so that no two neighbors receive the same color. Proved with computer assistance in 1976, the Four Color Theorem has inspired a wide range of questions involving more restrictive coloring constraints.

One such problem, proposed by Wegner in 1977, involves coloring vertices so that even vertices that are two steps apart—that is, those that are neighbors or share a common neighbor—receive different colors \cite{Wegner1977}. This is known as a \emph{distance-2 coloring}, or sometimes a \emph{square coloring}. In this setting, each vertex must receive a different color not only from neighbors adjacent to it but also from any vertex connected to it by at most two edges apart.

Wegner conjectured in 1977 that every planar graph in which no vertex has more than three neighbors admits a distance-2 coloring using at most 7 colors. The conjecture remained open for decades and was finally resolved in 2016 by Hartke, Jahanbekam, and Thomas using the discharging method and extensive computer assistance \cite{Hartke}. It was also proved independently by Thomassen in 2018 \cite{Thomassen}. In the same article, Wegner also provided a proof that 8 colors always suffice and provided an example of a planar graph with 7 vertices that requires exactly 7 colors under distance-2 coloring constraints. See Figure~\ref{wegner}.

\begin{figure}[H]
\centering
\begin{tikzpicture}
\draw (90:2) -- (210:2) -- (330:2) -- cycle;
\draw (0:0) -- (90:2);
\draw (0:0) -- (210:2);
\draw (0:0) -- (330:2);
\draw[fill=white] (0:0) circle (0.1);
\draw[fill=white] (90:1) circle (0.1);
\draw[fill=white] (210:1) circle (0.1);
\draw[fill=white] (330:1) circle (0.1);
\draw[fill=white] (90:2) circle (0.1);
\draw[fill=white] (210:2) circle (0.1);
\draw[fill=white] (330:2) circle (0.1);
\end{tikzpicture}
\caption{Wegner's 7-vertex graph that requires 7 colors in any distance-2 coloring.}
\label{wegner}
\end{figure}
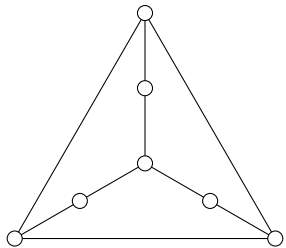

\bigskip

In this article, we revisit Wegner’s 8-color theorem and provide a novel treatment of his proof. Our approach avoids references to 3-connectedness, which Wegner used in combination with planarity. This is achieved through what we call the \emph{Inside-Outside Lemma}, a structural result that may be of independent interest. 
It allows us to formalize reasoning based on planar intuition while maintaining full combinatorial rigor.

Perhaps our main contribution is a significant simplification of the most technically challenging part of Wegner’s proof: the case involving the removal of a 5-cycle. Wegner distinguishes five configurations and identifies certain “critical” subcases. Some of his cases requires vertification of the outlines he provides, making the proof intricate to follow and less intuitive.

\bigskip

\subsection{Statement of the Theorem}
Throughout this paper, all graphs are assumed to be finite, undirected, and simple (that is, with no loops or multiple edges).

\emph{Planar graphs} are graphs that can be drawn in the plane without any edges crossing. Such a drawing is called a \emph{planar representation} or \emph{planar embedding} of a graph. These graphs are tightly constrained by geometry, and their structure often reveals useful patterns. 
In particular, connected planar graphs obey Euler’s formula: if $ V$, $ E$, and \( F \) are the numbers of vertices, edges, and faces of a planar representation of a graph, then $ V - E + F = 2 $.

 \emph{Distance-2 coloring} refers to a coloring in which no vertex shares a color with any of its neighbors of first or second order. That is, any pair of distinct vertices connected by a path of length one or two must have distinct colors.

To formalize this, we write \( v \sim_1 w \) if vertices \( v \) and \( w \) are adjacent (i.e., neighbors of first order), and \( v \sim_2 w \) if they are distinct and connected by a path of length two (i.e., neighbors of second order). We write \( v \sim w \) to mean that \( v \sim_1 w \) or \( v \sim_2 w \); in other words, \( v \) and \( w \) must receive different colors in any valid distance-2 coloring. 
In Figure~\ref{wegner}, $v_i\sim v_j$ for any pair $i\neq j$.

We now state Wegner’s theorem.
\begin{theorem}
Every planar graph in which no vertex has more than three neighbors admits a distance-2 coloring using at most 8 colors.
\end{theorem}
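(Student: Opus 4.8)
The plan is to argue by contradiction: let $G$ be a counterexample with the fewest vertices, and derive structural restrictions until a reducible configuration forces a smaller counterexample. Writing $G^2$ for the graph on $V(G)$ in which $v$ and $w$ are joined exactly when $v\sim w$, the claim is $\chi(G^2)\le 8$. I would first record the easy local reductions. Distinct components of $G$ share no distance-$2$ relations, so $G$ may be taken connected. If $G$ had a vertex $v$ of degree $d\le 2$, then $v$ would have at most $d$ neighbors of first order and at most $2d$ of second order, hence at most $3d\le 6<8$ vertices $w$ with $w\sim v$; coloring $G-v$ by minimality and assigning $v$ one of the $\ge 2$ remaining colors would extend the coloring, a contradiction. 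Thus every vertex of $G$ has degree exactly $3$, i.e.\ $G$ is cubic.

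Next I would locate a short face. Euler's formula $V-E+F=2$ for the cubic embedding gives $E=\tfrac32 V$ and $F=2+\tfrac12 V$, and since $\sum_f \ell(f)=2E=3V$, the faces cannot all have length $\ge 6$ (that would force $3V\ge 6F=12+3V$). Hence $G$ has a face of length $k\in\{3,4,5\}$. This is exactly where I would invoke the \emph{Inside-Outside Lemma} in place of Wegner's $3$-connectedness hypotheses: it lets me assume the short face is bounded by an induced cycle $C=v_1v_2\cdots v_k$ in which each $v_i$ has a single neighbor $w_i$ off $C$, and it decouples the coloring constraints arising from the interior of $C$ from those arising from the exterior.

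The reduction itself proceeds by case analysis on $k$. In each case I delete $v_1,\dots,v_k$ to form a smaller planar graph $G'$ with $\Delta\le 3$, color $(G')^2$ with $8$ colors by minimality, and extend. The key observation is a list-coloring count: restoring the $v_i$ creates no new constraints among previously colored vertices, and for each $v_i$ the colors forbidden by already-colored vertices are $c(w_i)$, the (at most two) other neighbors of $w_i$, and $c(w_{i-1}),c(w_{i+1})$ reached through $v_{i\pm 1}$ — at most $5$ in total — so each $v_i$ inherits a list $L_i$ of size $\ge 3$ from the palette of $8$. Moreover, in any cycle of length $k\le 5$ all pairs of vertices lie at distance $\lfloor k/2\rfloor\le 2$, so $\{v_1,\dots,v_k\}$ induces $K_k$ in $G^2$ and the $v_i$ simply need $k$ distinct colors, i.e.\ a system of distinct representatives for the lists $L_i$. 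For $k=3$ this holds unconditionally, since the union of the three size-$3$ lists is automatically $\ge 3$; for $k=4$ Hall's marriage theorem applies as well, the only failure mode being the degenerate coincidence in which all four lists equal one common $3$-set, which a single recoloring of a (now degree-$2$) vertex $w_i$ in $G'$ removes.

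The main obstacle, exactly as Wegner found, is the pentagon $k=5$: here $\{v_1,\dots,v_5\}$ induces $K_5$, so five distinct colors are required, yet five lists of size $3$ inside an $8$-palette can have union as small as $4<5$, and then Hall's condition genuinely fails. Closing this case is the heart of the argument. I would show that whenever the union of $L_1,\dots,L_5$ is too small, the colors of the $w_i$ and their neighbors are forced into a narrow set; the Inside-Outside Lemma bounds how these vertices interact, and combining it with a short recoloring of one or two of the degree-$2$ vertices $w_i$ in $G'$ enlarges the available union to $5$ before extending. Anticipating that essentially all the difficulty concentrates in this $5$-cycle case is consistent with the paper's stated contribution of simplifying precisely this step.
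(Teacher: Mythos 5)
Your overall frame coincides with the paper's: minimal counterexample, connectivity, elimination of vertices of degree at most $2$, Euler's formula forcing a face of length $k\in\{3,4,5\}$, an Inside--Outside-style decoupling of interior from exterior, and a delete-the-short-cycle-and-extend reduction. Your list count is also sound: each deleted cycle vertex $v_i$ has at most $5$ forbidden colors ($c(w_i)$, the at most two other neighbors of $w_i$, and $c(w_{i\pm1})$), hence a list of size $\ge 3$ from $8$ colors, and your observation that the $v_i$ induce a clique in $G^2$ so that an SDR suffices is correct. For $k=3$ Hall's condition holds trivially, and for $k=4$ you correctly isolate the unique failure mode (four lists equal to one common $3$-set); one can indeed check that since $w_i$ has at most $6$ distance-$2$ neighbors in $G'$ it admits a second color, and any such recoloring breaks the degenerate configuration and restores an SDR. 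This is a genuinely different mechanism from the paper's, which never invokes Hall's theorem.

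The gap is the $k=5$ case, which you yourself call the heart of the argument: what you offer there is a plan, not a proof. Unlike $k=4$, the Hall failures at $k=5$ form a real family rather than a single degenerate coincidence --- all five $3$-lists contained in a common $4$-set, or four lists equal to a common $3$-set (your ``union as small as $4$'' is even off; it can be $3$) --- a recoloring of one $w_i$ perturbs three lists simultaneously, and the one guaranteed alternative color for $w_i$ need not enlarge the union at all; none of this is analyzed. The paper sidesteps the whole difficulty by \emph{not} taking $G'$ to be the bare deletion: it adds the edge $y_1y_4$, drawn along the trace of the deleted path $y_1\!-\!x_1\!-\!x_5\!-\!x_4\!-\!y_4$ so planarity is preserved, which forces $c(y_1)\ne c(y_4)$ in the inherited coloring. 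A fixed extension order (uncolor $y_2,y_3$; set $x_1\gets c(y_4)$; recolor $y_2$; try $x_4\gets c(y_1)$, and if that fails conclude $c(y_1)=c(y_5)$; then color $y_3$, $x_2$, $x_3$, and finally $x_5$) then \emph{manufactures} two monochromatic pairs inside the distance-$2$ neighborhood of the last vertex, so spare colors are engineered rather than hunted for after a Hall failure. Note also that the paper must first prove, via minimality and the Inside--Outside Lemma, that $y_1,\dots,y_5$ are pairwise distinct and that $y_1\not\sim_1 y_4$ --- facts your degree-$2$ and list bookkeeping silently assume. To complete your route you would need either to carry out the full case analysis of degenerate list configurations for $k=5$ (including showing the needed recolorings exist and help), or to import the paper's edge-addition device, at which point your argument essentially becomes the paper's.
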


Following Wegner's approach, we proceed by contradiction. We assume that such a coloring is not always possible and let $G$  be a minimal counterexample—that is, a planar graph in which no vertex has more than three neighbors (a subcubic planar graph), which requires more than 8 colors in any distance-2 coloring, and which has the fewest possible number of vertices among all such graphs. Our goal is to show that no such minimal counterexample exists.

First, $G$ must be connected. If not, then by the minimality of $G$, each connected component could be colored independently using 8 or fewer colors, and the union of these colorings would yield a valid coloring of $G$, contradicting its minimality.

Next, we show  that $G$ cannot contain
a vertex $v$ which is of degree at most two or is part of a 3-cycle.
In each of these cases, we define a smaller graph $G'$  as shown in Figure~\ref{red_to_cubic} below.
In all three cases, $G'$ remains planar, sub-cubic, and has one less vertex than $G$, so by the minimality assumption, it admits a valid distance-2 coloring using at most 8 colors.

\begin{figure}[H]
\centering
\begin{subfigure}[b]{0.25\textwidth}
\centering
\begin{tikzpicture}[scale=1, every node/.style={circle,draw,fill=white,inner sep=2pt}]
\node (v) at (0,0) {v};
\node (w) at (2,0) {w};
\draw[dashed] (v) -- (w);
\end{tikzpicture}
\caption{$G'=G-v$}
\end{subfigure}
\begin{subfigure}[b]{0.32\textwidth}
\centering
\begin{tikzpicture}[scale=1, every node/.style={circle,draw,fill=white,inner sep=2pt}]
\node (v) at (0,0.5) {v};
\node (w) at (1,0) {w};
\node (u) at (1,1) {u};
\draw[dashed] (u) -- (v) -- (w);
\draw[thick] (u) -- (w);
\end{tikzpicture}
\caption{$G'=(G-v)+uw$}
\end{subfigure}
\qquad
\centering
\begin{subfigure}[b]{0.32\textwidth}
\begin{tikzpicture}[scale=1, every node/.style={circle,draw,fill=white,inner sep=2pt}]
\node (v) at (0.2,0.2) {v};
\node (u) at (1,1) {u};
\node (w) at (2,0) {w};
\node (a) at (-1,-0.5) {a};
\draw (u) -- (w);
\draw[dashed] (u) -- (v) -- (w);
\draw[dashed] (v) -- (a);
\draw[thick] (a) -- (w);
\end{tikzpicture}
\caption{$G'=(G-v)+aw$}
\end{subfigure}

\caption{}
\label{red_to_cubic}
\end{figure}

Moreover, in these configurations, the vertex $v$ has at most 3, 6, or 7 neighbors of first and second order, respectively. Thus, we can extend the coloring of $G'$ to a coloring of $G$ by assigning to $v$ any color not already used by its distance-2 neighbors.

Wegner observed, without proof, that every planar cubic graph must contain a cycle of length at most 5. For the sake of completeness, we now outline the standard argument.

Since $G$ is cubic, every vertex has degree 3, so the total number of edges satisfies $2E=3V$.
 Each face in a planar embedding is bounded by a closed walk, and each edge lies on the boundary of at most two faces. Letting $l_i$ denote the length of the boundary walk of the $i$-th face, we have: 
 \[\sum_{i=1}^F l_i=2E=3V.\] 
 
 Now, suppose for contradiction that every face boundary has length at least 6. Then: 
 
 \[\sum_{i=1}^F l_i\geq 6F\] so that $3V\geq 6F$. Substituting into Euler's formula we obtain \[2=V-E+F\leq V-\frac{3}{2}V+\frac{1}{2}V=0\] a contradiction. Therefore, some face must have boundary length at most 5.
 
Although face boundaries in a planar graph are closed walks and not necessarily cycles, any boundary walk of length at most 5 must contain a  cycle of length at most 5. If such a walk repeats a vertex, the repeated segment necessarily forms a smaller cycle\textemdash specifically of length at most 3\textemdash contradicting simplicity. Thus, some face boundary is a cycle of length 3, 4, or 5.

Since we already ruled out the possibility of a 3-cycle (see Figure~\ref{red_to_cubic} c), it follows that $G$ must contain a cycle of length  4 or 5.

Wegner relied on 3-connectedness together with planarity to control the placement of external neighbors around 4- and 5-cycles. While effective, this approach is technically involved and assumes a background in structural graph theory. Instead, we adopt a more geometric and intuitive strategy based on planarity alone. We do this with the help of the following lemma.

\begin{lemma}[Inside-Outside Lemma]\label{io}
Let \( G \) be a cubic planar graph, and let \( C \) be a cycle in \( G \) of length 3, 4, or 5. 
Fix a planar representation of \( G \). 
Let \( H_0 \) denote the induced subgraph of \( G \) by the vertices of \( C \) and inside \( C \), 
and let \( H_1 \) denote the induced subgraph by the vertices of \( C \) and outside \( C \).
If \( H_0 \) and \( H_1 \) can be distance-2 colored using \( m \) colors, 
then \( G \) can be distance-2 colored using \( m \) colors.
\end{lemma}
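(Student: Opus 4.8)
The plan is to take arbitrary $m$-colorings $c_0$ of $H_0$ and $c_1$ of $H_1$, recolor one of them by a global permutation of the color set so that the two colorings agree on the shared vertex set $V(C)$, and then glue them into a single coloring $c$ of $G$. Since $V(G)=V(H_0)\cup V(H_1)$ and $V(H_0)\cap V(H_1)=V(C)$, such a glued map is well defined as soon as the two colorings agree on $C$. The argument rests on two structural facts coming from planarity together with cubicness of $G$, and on one combinatorial fact about short cycles.

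First I would establish a geometric \emph{separation} property. Fix the embedding and let $\gamma$ be the Jordan curve traced by $C$; since each vertex of $C$ has degree $3$ with two of its edges lying on $C$, it has \emph{exactly one} neighbor off $C$, lying either strictly inside or strictly outside $\gamma$. I claim that a strictly interior vertex $u$ and a strictly exterior vertex $w$ are never related, i.e. $u\not\sim_G w$. Indeed, an edge $uw$ would cross $\gamma$, which planarity forbids; and a common neighbor $x$ of $u$ and $w$ is impossible in every location: if $x$ is interior then the edge $xw$ crosses $\gamma$, if $x$ is exterior then $xu$ crosses $\gamma$, and if $x\in V(C)$ then $x$ would need two off-cycle edges (one to the interior $u$, one to the exterior $w$), contradicting that it has only one. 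From the same ideas I would derive a \emph{locality} property: if $v,w\in V(H_0)$ and $v\sim_G w$, then already $v\sim_{H_0}w$ (and symmetrically for $H_1$). For pairs with at least one strictly interior vertex this holds because any edge between two points of the closed disk stays inside it, and any neighbor of a strictly interior vertex is interior or on $C$, hence in $H_0$; the remaining case $v,w\in V(C)$ is handled by the next observation.

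Next I would use the combinatorial fact that for $|C|\in\{3,4,5\}$ any two distinct vertices of $C$ lie at distance $1$ or $2$ \emph{within $C$}, so that $v\sim w$ with the connecting vertex (when the distance is $2$) also on $C$. Thus $C$ is a clique for the relation $\sim$, and every such relation is witnessed inside both $H_0$ and $H_1$. Consequently any valid distance-2 coloring must give the $|C|$ vertices of $C$ pairwise distinct colors; in particular $c_0$ and $c_1$ restrict to \emph{injective} maps on $V(C)$. This lets me define a permutation $\pi$ of the color set by $\pi(c_1(v))=c_0(v)$ for $v\in V(C)$: it is well defined and injective on the $|C|$ colors used by $c_1$ on $C$ precisely because both restrictions are injective, and it extends to a permutation of all $m$ colors. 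Replacing $c_1$ by $\pi\circ c_1$ keeps it a valid $m$-coloring of $H_1$ and makes it agree with $c_0$ on $V(C)$.

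Finally I would define $c:=c_0$ on $V(H_0)$ and $c:=\pi\circ c_1$ on $V(H_1)$, which is consistent on $V(C)$, and verify validity: given any pair $v\sim_G w$, the separation property forces $v,w$ to lie together in $V(H_0)$ or together in $V(H_1)$, whereupon the locality property gives $v\sim w$ in that subgraph, so its (valid) coloring assigns them distinct colors. Hence $c$ is a distance-2 coloring of $G$ with $m$ colors. I expect the separation property to be the main obstacle, as it is the only place where planarity, cubicness, and the bound $|C|\le 5$ are genuinely used: the Jordan-curve case analysis and, crucially, the single off-cycle edge per cycle vertex are what prevent interior–exterior interference, while the clique property that powers the permutation step is exactly what fails for cycles of length six or more.
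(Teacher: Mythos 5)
Your proof is correct and takes essentially the same route as the paper: planarity plus cubicness at the cycle vertices rules out any distance-2 interference between strict interior and strict exterior, the short cycle length forces distinct colors on \( C \), and a permutation of the color set glues the two colorings along \( C \). You merely make explicit what the paper leaves implicit (the \( u\sim_1 w \) case, the locality of constraints within each side, and the permutation construction); the one nitpick is that a cycle vertex has \emph{at most} one neighbor off \( C \) rather than exactly one, since a chord could absorb its third edge, but your argument only ever uses the upper bound.
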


\begin{proof}
Let \( G_i \) be the induced subgraphs of \( H_i \) after deleting the vertices of \( C \) for \( i = 0, 1\). 
Suppose \( u \in G_0 \) and \( w \in G_1 \). If \( u \sim_2 w \) in \( G \), then there must exist a vertex \( v \in C \) such that \( u \sim_1 v \sim_1 w \); see Figure~\ref{uw}. 
But this would imply that \( v \) has at least four first neighbors\textemdash namely, its two neighbors on the cycle, along with \( u \) and \( w \). 
Since \( G \) is cubic, this is impossible.

Therefore, there are no distance-2 constraints between \( G_0 \) and \( G_1 \) other than those involving vertices in \( C \). 
Since all vertices on \( C \) must receive distinct colors in any distance-2 coloring, we can independently color \( H_0 \) and \( H_1 \) using \( m \) colors, 
and then match the colorings along \( C \). 
This yields a distance-2 coloring of the full graph \( G \).
\end{proof}

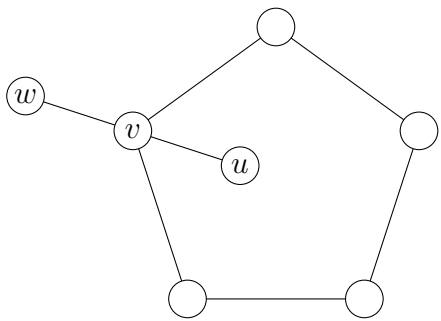
\begin{figure}[H]
\centering
\def\c{5}
\def\a{\fpeval{360/\c}}
\begin{tikzpicture}
\draw (162:0.5) -- (162:3.5);
\foreach \t in {1,...,\c}{
\draw (\a*\t + 90 : 2) -- (\a*\t + 90 + \a : 2);
}
\foreach \t in {1,...,\c}{
\draw[fill=white] (\a*\t + 90: 2) circle (0.25);
}
\draw (162:2) node{$v$};
\draw[fill=white] (162:0.5) node{$u$} circle (0.25);
\draw[fill=white] (162:3.5) node{$w$} circle (0.25);
\end{tikzpicture}
\caption{A 5-cycle \( C \) with interior vertex \( u \in G_0 \) and exterior vertex \( w \in G_1 \), both connected to \( v \in C \)}
\label{uw}
\end{figure}

\begin{remark}
The Inside-Outside Lemma implicitly uses the Jordan Curve Theorem, which guarantees that any cycle in a planar graph separates the plane into an interior and an exterior region, with the cycle as their common boundary. Furthermore, if all other vertices lie inside the cycle in a planar embedding, we may invert the planar presentation (e.g., via stereographic projection) so that they lie outside instead. 

Thus, in our planar representation of the last two cases\textemdash where the minimal cycle \( C \) is a 4-cycle or 5-cycle\textemdash we may assume, without loss of generality, that the interior of \( C \) is empty.
\end{remark}

In the case of the 5-cycle \( (x_1,x_2,x_3,x_4,x_5) \), we now explain further assumptions about the structure of the exterior neighbors, which also apply (by analogy) in the 4-cycle case. Since \( G \) is cubic, each vertex \( x_1, x_2, x_3, x_4, x_5 \) on the cycle has a third neighbor outside the cycle, which we denote \( y_1, y_2, y_3, y_4, y_5 \), respectively. 
See Figure~\ref{5cycle} below. 

\def\c{5}
\def\a{\fpeval{360/\c}}
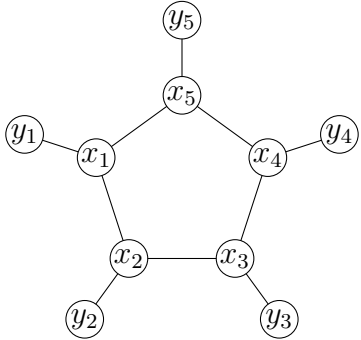
\begin{figure}[H]
\centering
\begin{tikzpicture}[scale=1]
\foreach \t in {1,...,\c}{
\draw (\a*\t + 90 : 1.2) -- (\a*\t + 90 + \a : 1.2);
}
\foreach \t in {1,...,\c}{
\draw (\a*\t + 90: 1.2) -- (\a*\t + 90: 2.2);
\draw[fill=white] (\a*\t + 90: 2.2) node{$y_{\t}$} circle (0.25);
\draw[fill=white] (\a*\t + 90: 1.2) node{$x_{\t}$} circle (0.25);
}
\end{tikzpicture}
\caption{5-cycle with exterior neighbors}
\label{5cycle}
\end{figure}

It is not immediately obvious as the drawing suggest, that these five exterior neighbors are distinct. To justify this, we proceed as follows: by the minimality of the 5-cycle, \( y_2 \) and \( y_5 \) must be distinct from \( y_1 \), since otherwise the graph would contain a 3-cycle. 
Next, we apply {the Inside-Outside} (Lemma~\ref{io}) to conclude that \( y_3 \) and \( y_4 \) must also be distinct from \( y_1 \). 
For example, if $y_4=y_1$, 
then using planarity one side of the enclosing  4-cycle \( (y_1,x_1,x_5,x_4,y_1) \) would enclose either \( x_2 \) and \( x_3 \) inside and $y_5$ outside or the other way around. 
Thus, we could apply  Lemma~\ref{io} to 4-cycle \( (y_1,x_1,x_5,x_4,y_1) \) and complete the coloring of \( x_5 \), with 8 colors or less.  By symmetry, we may therefore assume that all five neighbors \( y_1, y_2, y_3, y_4, y_5 \) are pairwise distinct.

In our proofs of the 4-cycle and 5-cycle cases, we will need to assume that \( y_1 \not\sim_1 y_3 \) and  \( y_1 \not\sim_1 y_4 \) respectively.   For the 5-cycle case, suppose instead that the edge \(y_1y_4\)  exists. Then the resulting 5-cycle \( C = (y_1,x_1,x_5,x_4,y_4,y_1) \) (see Figure~\ref{5cycle}) would enclose either \( x_2 \) and \( x_3 \) on the interior and \( y_5 \) on the exterior, or the reverse.
 
In either case, the separation induced by \( C \) would allow us to apply Lemma~\ref{io}, coloring the inside and outside regions independently and matching the coloring along the cycle. This would yield a valid 8-coloring of $G$, contradicting its minimality as a counterexample. The 4-cycle case follows similarly.

\section{The 4-Cycle case}

Suppose the minimal cycle in \( G \) is a 4-cycle \( (x_1,x_2,x_3,x_4,x_1) \) with external neighbors \( y_1, y_2, y_3, y_4 \) adjacent to \( x_1, x_2, x_3, x_4 \), respectively, as shown in Figure~\ref{4-cycle}. 
Define \( G'=G-\{x_1,x_2,x_3,x_4\}+y_1y_3 \).
Since  the new edge $y_1y_3$ can be drawn along the trace of the path \( y_1 \!-\! x_1 \!-\! x_4 \!-\! x_3 \!-\! y_3 \) from the original planar embedding of $G$, the resulting graph $G'$ remains planar.

\begin{figure}[H]
\centering
\begin{subfigure}[b]{0.45\textwidth}
    \centering
\begin{tikzpicture}
\foreach \t in {0,1,2,3}{
\draw[dashed] (45+90*\t:1.25) -- (135+90*\t:1.25);
\draw[dashed] (45+90*\t:1.25) -- (45+90*\t:2.125);
\draw[fill=white] (45+90*\t:1.25) circle (0.25);
\draw[fill=white] (45+90*\t:2.125) circle (0.25);
}
\draw[fill=white] (45:1.25) node{$x_4$} circle (0.25);
\draw (135:1.25) node{$x_1$};
\draw (-135:1.25) node{$x_2$};
\draw (-45:1.25) node{$x_3$};
\draw[fill=white] (45:2.125) node{$y_4$};
\draw (135:2.125) node{$y_1$};
\draw (-135:2.125) node{$y_2$};
\draw (-45:2.125) node{$y_3$};
\draw[thick] (130:1.95) .. controls (45:1) .. (-40:1.95);
\end{tikzpicture}
\caption{A 4-cycle with the exterior neighbors}
\label{4-cycle}
\end{subfigure}
\qquad
\begin{subfigure}[b]{0.45\textwidth}
\centering
\def\c{5}
\def\a{\fpeval{360/\c}}
\centering
\begin{tikzpicture}[scale=1]
\draw[thick] (\a + 90 : 2.2) .. controls (90:2) .. (4*\a + 90 : 2.2);
\foreach \t in {1,...,\c}{
\draw[dashed] (\a*\t + 90 : 1.2) -- (\a*\t + 90 + \a : 1.2);
}
\foreach \t in {1,...,\c}{
\draw[dashed] (\a*\t + 90: 1.2) -- (\a*\t + 90: 2.2);
\draw[fill=white] (\a*\t + 90: 2.2) node{$y_{\t}$} circle (0.25);
\draw[fill=white] (\a*\t + 90: 1.2) node{$x_{\t}$} circle (0.25);
}
\end{tikzpicture}
\caption{A 5-cycle with the exterior neighbors}
\label{5-cycle}
\end{subfigure}
\caption{}
\end{figure}

By the minimality of \( G \), the reduced graph \( G' \) admits a distance-2 coloring using at most 8 colors. 
We extend coloring of \( G'\) to \( G \).

Since \( y_1 \) and \( y_3 \) are adjacent in $G'$, they have different colors. By symmetry, we may assume  \( y_3 \) and \( y_4 \) are also  differently colored. 
Uncolor \( y_2 \) and color \( x_1 \) with the color of \( y_3 \). 
Now \( y_2 \) has at most nine distance-2 neighbors, including uncolored  \( x_2 \) and \( x_3 \), so it can be recolored.

The remaining vertices \( x_2, x_3, \) and \( x_4 \) lie on a 4-cycle and each has at most eight distance-2 neighbors, including two uncolored ones, so each has at least two available colors. Moreover, \( x_4 \) has at least three available colors, since both \( x_1\) and \( y_3 \) share color. Thus, we can complete  the 8-coloring of \( G \), contradicting minimality. 

\section{The 5-Cycle case}
Suppose the minimal cycle in \( G \) is a 5-cycle \( (x_1,x_2,x_3,x_4,x_5,x_1) \) with external neighbors \( y_1, y_2, y_3, y_4 ,y_5\) adjacent to \( x_1, x_2, x_3, x_4,y_5 \), respectively, as shown in Figure~\ref{5-cycle}. 
Define \( G'=G-\{x_1,x_2,x_3,x_4,x_5\}+y_1y_4 \).
Since  the new edge $y_1y_4$ can be drawn along the trace of the path \( y_1 \!-\! x_1 \!-\! x_5-\! x_4 \!-\! y_4 \) from the original planar embedding of $G$, the resulting graph $G'$ remains planar.

By the minimality of \( G \), the reduced graph \( G' \) admits a distance-2 coloring using at most 8 colors. 
We extend coloring of \( G'\) to \( G \).

Since \( y_1 \) and \( y_4 \) are adjacent in $G'$, they have different colors. By symmetry, we may assume  \( y_4 \) and \( y_5 \) are also  differently colored. 

We begin by uncoloring $y_2$ and $y_3$, and assigning to $x_1$ the color of $y_4$. Then $y_2$, which has at most 9 distance-2  neighbors (including the yet to be colored $x_2$ and $x_3$), can be recolored.

Next, we attempt to color $x_4$ with the same color as $y_1$. If this is not possible, then $y_1$ and $y_5$ must share a color, and we assign to $x_4$ any admissible color.

Now we color $y_3$, which like $y_2$, has at most 9 distance-2 neighbors, including two uncolored vertices, so it has at least one available color.

We finish by coloring the remaining vertices $x_2$, $x_3$, $x_5$. Each has at most 9 distance-2 neighbors, including two uncolored ones, and thus has at least one admissible color. In particular, $x_3$, which sees both $x_1$ and $y_4$ (now sharing a color), has at least two admissible colors.

Finally, consider $x_5$. Its neighbors include $x_1$ and $y_4$, which share a color. If $x_4$ was colored to match $y_1$, then $x_4$ and $y_1$ are another pair of neighbors with the same color. Otherwise, $y_1$ and $y_5$ share a color. In both cases, $x_5$ has at least two pairs of neighbors sharing a color, so among its 7 colored neighbors, at most 5 distinct colors appear. Thus, $x_5$ has at least 3 admissible colors.\\

\noindent This completes a valid 8-coloring of $G$, contradicting the assumption that $G$ was a minimal counterexample.


\begin{thebibliography}{9}
\bibitem{BondyMurty} J.\ A.\ Bondy and U.\ S.\ R.\ Murty, \emph{Graph Theory}, Graduate Texts in Mathematics, Springer, 2008.

\bibitem{Borodinetal} O.\ Borodin, A.\ Kostochka, and X.\ Zhu, \emph{Adjacent vertex distinguishing edge colorings of sparse graphs}, Electron.\ J.\ Combin. \textbf{15} (2008), R57.

\bibitem{CranstonWestZhang} D.\ W.\ Cranston, D.\ B.\ West, and X.\ Zhang, \emph{A tight bound on the chromatic number of the square of a planar graph}, \url{arXiv:1604.06504v1} (2016).

\bibitem{Hartke} S.\ Hartke, S.\ Jahanbekam, and B.\ Thomas, \emph{The chromatic number of the square of subcubic planar graphs}, Faculty Publications (2016).

\bibitem{Thomassen} C.\ Thomassen, \emph{Every planar graph is distance-2 colorable}, Journal of Combinatorial Theory, Series B 128 (2018) 192-218.

\bibitem{Tucker} A.\ Tucker, \emph{Applied Combinatorics}, 6th ed., Wiley, 2012.

\bibitem{Wegner1977} G.\ Wegner, Graphs with given diameter and a coloring problem, \emph{Technical report}, University of Dortmund, 1977.
\end{thebibliography}
\end{document}